\title{Conditions on the regularity of balanced $c$-partite tournaments for the existence of strong subtournaments with high minimum degree\footnote{Supported by   PAPIIT-M\'exico under project IN107218;  CONACYT under projects A1-S-12891 and 47510664.}}
\def\ps@pprintTitle{%
 \let\@oddhead\@empty
 \let\@evenhead\@empty
 \def\@oddfoot{}%
 \let\@evenfoot\@oddfoot}
\def\b.#1{{\bf #1}}  
\def\cuadrito{ ${\vcenter{\vbox{\hrule height.4pt
                                \hbox{\vrule width.4pt height3pt \kern3pt
                                      \vrule width.4pt}
                                 \hrule height.4pt}}}$ }
\def\cua{ ${\vcenter{\vbox{\hrule height .4pt
                                \hbox{\vrule width .4 pt height7pt \kern7pt
                                      \vrule width .4pt}
                                 \hrule height .4pt}}}$ }
 \newtheorem{theorem}{Theorem}
 \newtheorem{claim}{Claim}
 \newtheorem{corollary}[theorem]{Corollary}
 \newtheorem{lemma}{Lemma}
 \newtheorem{remark}{Remark}
 \newenvironment{proof}[1][Proof]{\textbf{#1.} }{\ \rule{0.5em}{0.5em}\vskip 12pt}
\newif\ifpdf
\begin{document}

\begin{frontmatter}

\author[label1]{Ana Paulina Figueroa} 
\address[label1]{Departamento de Matem\'aticas ITAM, M\'exico}
\ead{apaulinafg@gmail.com}

\author[label2]{Juan Jos\'e Montellano-Ballesteros\corref{cor1}}
\address[label2]{Instituto de Matem\'aticas, UNAM, M\'exico}
\ead{juancho@math.unam.mx}
\cortext[cor1]{Corresponding author}

\author[label3]{Mika Olsen}
\address[label3]{Departamento de Matem\'aticas Aplicadas y Sistemas, UAM-C, M\'exico}
\ead{olsen@correo.cua.uam.mx }

\begin{abstract} We consider the following problem posed by Volkmann in 2007:    {\it How close to regular must a c-partite tournament be, to secure a strongly
connected subtournament of order $c$?}  We give sufficient conditions on the regularity of balanced  $c$-partite tournaments to assure the existence of strong maximal subtournament  with minimum degree at least  $\left\lfloor \frac{c-2}{4}\right\rfloor+1$. We obtain this result as an application of counting the number of subtournaments of order $c$ for which a vertex has minimum out-degree (resp. in-degree) at most $q\geq 0$.
\end{abstract}

\begin{keyword}
globar irregularity, multipartite tournaments, strong maximal subtournaments

\MSC[2010] 05C20\sep  05C35
\end{keyword}

\end{frontmatter}

\section{Introduction}
Let $c$ be a non-negative integer, a  {\it $c$-partite or multipartite tournament} is a digraph obtained from a complete $c$-partite graph by orienting each edge. In 1999  Volkmann  \cite{sub} developed the first contributions in the study of the structure of the strongly connected subtournaments in multipartite tournaments. He proved that every almost regular $c$-partite tournament contains a strongly connected subtournament of order $p$ for each $p\in\{3,4, \ldots, c-1 \} $. In the same paper he also proved that if each partite set of an almost regular  $c$-partite tournament has at least $\frac{3c}{2}-6$ vertices, then there exist a strong subtournament of order $c$. In 2008  Volkmann and Winsen \cite{Almost-regular} proved that every almost regular $c$-partite tournament has a strongly connected subtournament of order $c$ for $c\geq 5$. In 2011 Xu et al.  \cite{X}  proved that every  vertex of regular  $c$-partite tournament with  $c\ge16$, is contained in a strong subtournament of order  $p$ for every  $p\in \{3,4,\dots,c\}$.  The following problem was posed by Volkmann \cite{survey}:  {\it Determine further sufficient conditions for (strongly connected) $c$-partite tournaments to contain a strong
subtournament of order $p$, for some $4 \le p \le c$. How close to regular must a c-partite tournament be, to secure a strongly
connected subtournament of order $c$?}

On  this direction, in 2016 \cite{amo},  we proved that for every (not necessarily strongly connected) balanced $c$-partite tournament of order $n\ge6$, if  the global irregularity of $T$ is at most $\frac{c}{\sqrt{3c+26}}$, then $T$ contains a strongly connected tournament of order $c$.  A    $c$-partite tournament  is  balanced if each of its partite sets have  the same amount of vertices.

In this paper, we consider Volkmann's problem for balanced $c$-partite tournaments. We give sufficient conditions on its regularity to assure the existence of a strong subtournament  with minimum degree at least  $\left\lfloor \frac{c-2}{4}\right\rfloor+1$. We obtain this result as an application of counting the number of subtournaments of order $c$ for which a vertex has minimum out-degree (resp. in-degree) at most $q\geq 0$.

 \section{Notation and definitions}

\label{sec1}

We follow all the definitions and notation of \cite{B-J-G}.  Let $G$ be a $c$-partite tournament of order $n$ with partite sets $\{V_i\}_{i=1}^c$. We call $G$ \textit{balanced}, if all the partite sets have the same number of vertices and we denote by $G_{r,c}$ a balanced $c$-partite tournament satisfying that $|V_i|=r$ for every $ i\in [c]$, where $[c]= \{1, \dots, c\}$. Throughout this paper $|V_i|=r$ for each $i\in [c]$.

Let $G$ be a $c$-partite tournament.  For  $x\in V(G)$ and  $i\in [c]$, the {\it out-neighborhood of $x$ in $V_i$} is $N^+_i(x) =V_i\cap N^+(x)$; the {\it in-neighborhood of $x$ in $V_i$} is $N^-_i(x) =V_i\cap N^-(x)$;   $d^+_i(x) =|N^+_i(x) |$ and   $d^-_i(x) = |N^-_i(x)|$.  

For an oriented graph $D$, the {\it global irregularity of $D$} is defined as $$i_g(D) = \max\limits_{x,y\in V(D)}\{\max\{d^+(x), d^-(x)\}- \min\{d^+(y), d^-(y)\}\}.$$ 
If $i_g(D)=0$ ($i_g(D) \leq 1$, resp.) $D$ is regular (almost regular, resp.).  For our study we introduce another irregularity parameter, namely  {\it  local partite irregularity of $D$},  which is defined as 
$$\mu(D)= \max\limits_{x\in V(D)}\{ \max\limits_{i\in[c]}\{|d_i^+(x)-d_i^-(x)|\}\}. $$
Observe that, for a balanced $c$-partite tournament $G_{r,c}$, $\mu(G_{r,c})\geq \frac{i_g(G_{r,c})}{c-1}$.

\section{Maximal tournaments for which a vertex has  minimum degree at most q }  

The aim of this section is to give sufficient conditions on the minimum degree, local partite  irregularity and global irregularity to obtain a bound on the number of maximal tournaments in a balanced $c$-partite tournament $G_{r,c}$ in which a given vertex  $x \in V(G_{r,c})$ has out-degree (in-degree  resp.) at most $q$, for some given $q\geq 0$.

Let  $x\in V_c$ and let $\mathcal{H}^+_k(x)$ be the set of vectors  $(h_1,h_2, \ldots h_{c-1})\in \{0,1\}^{c-1}$ such that  $h_i=1$ if $d_i^+(x)=r$; $h_i=0$  if $d_i^+(x)=0$; and $\sum_{i=1}^{c-1} h_i=k$. 
A maximal tournament containing the vertex $x$ with out-degree $k$ can be constructed choosing a vertex for each part $V_i$ for $i\in [c-1]$ as follows: Given $\textbf{h}=(h_1,h_2,\dots,h_{c-1})\in \mathcal{H}^+_k(x)$, we choose an out neighbor of $x$ from $V_i$ if and only if $h_i=1$. 
Since the number of maximal tournaments constructed in this way for a fixed $\textbf{h}$ is $\prod_{i=1}^{c-1}d_i^+(x)^{h_i}d_i^-(x)^{1-h_i} $,  
 we have the following remark.

\begin{remark}\label{cuantos}
Let $G_{r,c}$ be a balanced $c$-partite tournament and let $x\in V_c$. The number of maximal tournaments of $G_{r,c}$ for which $x$ has out-degree   $k$ is equal to 
$$\sum_{h\in \mathcal{H}^+_k(x)}\prod_{i=1}^{c-1}d_i^+(x)^{h_i}d_i^-(x)^{1-h_i}.$$   
\end{remark}



Let $x \in V(G_{r,c})$. For each $q\geq 0$, let $T^+_q(x)$ (resp. $T^-_q(x)$)  be the number of maximal tournaments of $G_{r,c}$ for which $x$ has out-degree (resp. in-degree) at most $q$. 
All the following results regarding  $T_q^+(x)$ can be obtained for $T_q^-(x)$ in an analogous way.

Let $x \in V(G_{r,c})$. Assume w.l.o.g that  $x\in V_c$.   By Remark \ref{cuantos},  
$$ T_q^+(x) = \sum\limits_{k=0}^q\left(\sum_{h\in \mathcal{H}^+_k(x)}\prod_{i=1}^{c-1} d_i^+(x)^{h_i}d_i^-(x)^{1-h_i}\right).$$


%
%
%


 In order to bound  $T^+_q(x)$, for any integer  $r\geq 2$, and  $g_1,g_2, \ldots,g_{s}$  real numbers such that $0 \le g_i \leq r$, we define
 $$M(g_1, \dots , g_s; k)=\sum_{h\in \mathcal{H}^s_{k}}\prod_{i=1}^{s} g_i^{h_i}(r-g_i)^{1-h_i},$$
 where $\mathcal{H}^s_{k}$ is the set of $s$-vectors  $(h_1,h_2, \ldots,h_s)\in \{0,1\}^s$ such that:  if $g_i=r$ then $h_i=1$;  if $g_i=0$ then  $h_i=0$; and  $\sum\limits_{i=1}^sh_i=k$.


If for a given $x\in V(G_{r,c})$, $g_i= d^+_i(x)$, with $i\in [c-1]$,  the following lemma gives a sufficient condition on the out-degree of $x$ to assure that the number of maximal tournaments in which $x$ has out-degree $q$ is at least equal to the number of maximal tournaments in which $x$ has out-degree $q-1$.

\begin{lemma}\label{chico} Let $r\geq 2$ be an integer, and let $g_1, \dots , g_{s}$ be real numbers such that  $0\leq g_i \leq r$.  Let  $\Gamma= max\{ g_i\}_{i\in[s]}$ and  $\gamma = min \{g_i\}_{i\in[s]}$. If for some integer $q \geq 1$ we have that  $\sum\limits_{i\in [s]} g_i \geq q(r+\Gamma-\gamma) - \Gamma $, then  $$M(g_1,  \dots, g_{s}; q) \geq M(g_1,  \dots, g_{s}; q-1).$$
\end{lemma}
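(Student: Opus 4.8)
The plan is to bypass any unimodality or log-concavity machinery and instead extract, from a single exact algebraic identity, an inequality into which the hypothesis $\sum_{i\in[s]}g_i\ge q(r+\Gamma-\gamma)-\Gamma$ plugs verbatim. For $S\subseteq[s]$ put $w(S)=\prod_{i\in S}g_i\prod_{i\notin S}(r-g_i)$. Identifying a vector $h\in\mathcal{H}^s_k$ with the set $\{i:h_i=1\}$, and noticing that any $0$-$1$ vector with $k$ ones that is \emph{not} in $\mathcal{H}^s_k$ produces a product containing a factor $g_i=0$ or $r-g_i=0$, we get $M(g_1,\dots,g_s;k)=\sum_{|S|=k}w(S)=:M_k$, with every summand $w(S)\ge0$ since $0\le g_i\le r$.

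The key step is the elementary identity $w(S)(r-g_j)=g_j\,w(S\setminus\{j\})$, which holds for every $S$ and every $j\in S$ as a polynomial identity (no division is involved, so it is valid even when $g_j\in\{0,r\}$). Summing it over $j\in S$ gives $w(S)\bigl(qr-\sum_{j\in S}g_j\bigr)=\sum_{j\in S}g_j\,w(S\setminus\{j\})$ whenever $|S|=q$; summing that over all $q$-subsets $S$, and on the right rewriting each pair $(S,j)$ as $(T,j)$ with $T=S\setminus\{j\}$, $|T|=q-1$, $j\notin T$, yields
$$qr\,M_q-A_q\;=\;G\,M_{q-1}-A_{q-1},\qquad\text{where }\ G:=\sum_{i\in[s]}g_i,\quad A_k:=\sum_{|S|=k}w(S)\sum_{j\in S}g_j.$$
Since $\gamma\le g_i\le\Gamma$ for all $i$ and every $w(S)\ge0$, one has $A_q\ge q\gamma M_q$ and $A_{q-1}\le(q-1)\Gamma M_{q-1}$. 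Feeding these two bounds into the displayed identity gives
$$q(r-\gamma)M_q\;\ge\;qr\,M_q-A_q\;=\;G\,M_{q-1}-A_{q-1}\;\ge\;\bigl(G-(q-1)\Gamma\bigr)M_{q-1}.$$

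To finish, rewrite the hypothesis as $G\ge q(r-\gamma)+(q-1)\Gamma$, i.e. $G-(q-1)\Gamma\ge q(r-\gamma)\ge0$; together with $M_{q-1}\ge0$ this turns the last display into $q(r-\gamma)M_q\ge q(r-\gamma)M_{q-1}$. If $\gamma<r$ we divide by $q(r-\gamma)>0$ and are done; if $\gamma=r$ then every $g_i=r$, so $M_k=0$ for $k\ne s$ while $M_s=r^s$, and the claimed inequality is immediate in the relevant range $q\le s$. I do not expect a genuine obstacle here: essentially all the work is in recognizing that the double-counting identity $w(S)(r-g_j)=g_j\,w(S\setminus\{j\})$ is precisely what manufactures the factor $r-\gamma$ on one side and $(q-1)\Gamma$ on the other, so that the hypothesis matches on the nose; after that only the trivial estimates on $A_q$ and $A_{q-1}$ and one division remain, with the single case $\gamma=r$ needing a separate line.
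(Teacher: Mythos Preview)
Your argument is correct. The identity $qrM_q-A_q=GM_{q-1}-A_{q-1}$ is verified exactly as you indicate, the bounds $A_q\ge q\gamma M_q$ and $A_{q-1}\le(q-1)\Gamma M_{q-1}$ are immediate from $w(S)\ge 0$, and the hypothesis rewrites precisely as $G-(q-1)\Gamma\ge q(r-\gamma)$, so the division step goes through whenever $\gamma<r$. Your separate line for $\gamma=r$ (forcing all $g_i=r$) is also fine in the range $q\le s$; note that the paper's own proof has the same unstated restriction here, and in every application of the lemma one has $q\le s$ anyway.

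The route, however, is genuinely different from the paper's. The paper keeps the constrained index set $\mathcal{H}^s_k$ throughout, fixes $\mathbf{h}\in\mathcal{H}^s_{q-1}$, looks at its ``successors'' $F(\mathbf{h})\subseteq\mathcal{H}^s_q$, and computes the ratio $\sum_{\mathbf{h}'\in F(\mathbf{h})}w(\mathbf{h}')/w(\mathbf{h})=\sum_j g_j/(r-g_j)$; a separate claim, proved by contradiction from the hypothesis, shows this ratio is at least $q-p_r$, and then a second double count (each $\mathbf{h}'\in\mathcal{H}^s_q$ lies in exactly $q-p_r$ of the sets $F(\mathbf{h})$) finishes. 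Your approach replaces all of this with one multiplicative-to-additive identity $w(S)(r-g_j)=g_j\,w(S\setminus\{j\})$ summed twice, after the clean observation that one may sum over \emph{all} $k$-subsets since the forbidden ones contribute zero. What this buys you is that no ratios $g_j/(r-g_j)$ ever appear (so the boundary cases $g_j\in\{0,r\}$ need no special bookkeeping until the very last line), no auxiliary claim or contradiction argument is needed, and the place where the hypothesis enters is completely transparent. The paper's argument, by contrast, makes the combinatorics of the level sets $\mathcal{H}^s_{q-1}\to\mathcal{H}^s_q$ more visible, which is perhaps closer in spirit to the tournament-counting interpretation, but at the cost of a longer and more delicate proof.
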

\begin{proof}  Let $g_1,\ldots,g_{s}$ be real numbers such that $0 \le g_i \leq r$, and let $q\geq 1$.  Suppose w.o.l.g. that for every  $i$,  if $1\leq i \leq t$ then $0<g_i<r$; if $t+1 \leq i \leq t+p_r$ then $g_i = r$; and if $t+p_r+1 \leq i \leq s$ then $g_i=0$.  Observe that for every ${\bf h}=(h_1,\ldots. h_{s})\in \mathcal{H}^{s}_{q-1} $ and every ${\bf h'}=(h'_1,\ldots. h'_{s})\in \mathcal{H}^{s}_{q} $ we have that if  $t+1 \leq i \leq t+p_r$, then $h_i=h'_i = 1 $; and if  $t+p_r+1 \leq i \leq s$, then $h_i=h'_i = 0 $.  Notice that if $p_r \geq q$, then  $\mathcal{H}^{s}_{q-1}  = \emptyset$ which implies that $M(g_1,  \dots, g_{s}; q-1)=0$  and  the lemma follows. Thus, we can suppose that $q\geq p_r+1$.

 For each ${\bf h}=(h_1,\ldots. h_{s})\in \mathcal{H}^{s}_{q-1} $  let  
$F({\bf h}) = \{(h'_1,\dots, h'_{s})\in \mathcal{H}^{s}_{q} :  h_i \leq  h'_i  \hbox{ for } i \in [s]\}$  and let $a({\bf h}) = \{ j : h_j = 1 \hbox{ for } j \in [t]\}$. Observe that for every ${\bf h}\in \mathcal{H}^{s}_{q-1}$,  $|a({\bf h})| = q-1-p_r$.

By definition of $\mathcal{H}^{s}_{q} $ and $\mathcal{H}^{s}_{q-1} $, it follows that given ${\bf h}\in \mathcal{H}^{s}_{q-1} $ and ${\bf h' }\in F({\bf h})\subseteq \mathcal{H}^{s}_{q} $ (except for some $j_0\in [t]\setminus a({\bf h})$, where $h'_{j_0}= h_{j_0}+1$), we have that $h_i = h'_i$ for every $i\in [s]\setminus\{j_0\}$.  

Thus, \begin{equation}\label{uno} \frac{\sum\limits_{\textbf{h'}\in F(\textbf{h})} \prod\limits_{i=1}^{s} g_i^{h'_i} (r-g_i)^{1-h'_i}}{  \prod\limits_{i=1}^{s} g_i^{h_i} (r-g_i)^{1-h_i}}= \sum\limits_{\textbf{h'}\in F(\textbf{h})} \frac{\prod\limits_{i=1}^{s} g_i^{h'_i} (r-g_i)^{1-h'_i}}{  \prod\limits_{i=1}^{s} g_i^{h_i} (r-g_i)^{1-h_i}}=\sum\limits_{j\in [t]\setminus a({\bf h})} \frac{g_j}{r-g_j}.\end{equation}

 \begin{claim}\label{cota0}  $\sum\limits_{j\in [t]\setminus a({\bf h})} \frac{g_j}{r-g_j}\geq q-p_r$. 
 \end{claim}
 Suppose that $\sum\limits_{j\in [t]\setminus a({\bf h})} \frac{g_j}{r-g_j} < q-p_r$. Let $Q_0 = \min\limits_{i \in [t]} \{ g_i \}$. Thus, $\sum\limits_{j\in [t]\setminus a({\bf h})} \frac{g_j}{r-Q_0} < q-p_r$ and therefore  $\sum\limits_{j\in [t]\setminus a({\bf h})} g_j < (r-Q_0)( q-p_r)$ . On the other hand,  $\sum\limits_{j\in [s]} g_j  = \sum\limits_{j\in [t]} g_i + rp_r =  \sum\limits_{j\in [t]\setminus  a({\bf h})}  g_i +  \sum\limits_{j\in a({\bf h} )} g_i + rp_r$. Hence,  $ \sum\limits_{j\in [t]\setminus  a({\bf h})}  g_i  = \sum\limits_{j\in [s]} g_j  -  \sum\limits_{j\in a({\bf h} )} g_i - rp_r$ which implies that
  $$  (r-Q_0)( q-p_r) > \sum\limits_{j\in [s]} g_j  -  \sum\limits_{j\in a({\bf h} )} g_i - rp_r$$ and therefore, after some easy calculation, we see that 
   $ rq    +  \sum\limits_{j\in a({\bf h} )} g_i  -Q_0(q -p_r) > \sum\limits_{j\in [s]} g_j .$
 Let $Q_1 = max \{ g_i : i \in [t]\}$. Since $|a({\bf h})| = q-1-p_r$, it follows that  
 $$ rq +  Q_1(q-1-p_r) -Q_0(q -pr) > \sum\limits_{j\in [s]} g_j. $$  Since $ \Gamma\geq Q_1\geq Q_0 \geq \gamma$  and $p_r\geq 0$, we see that 
 $$\begin{array}{lcl}
Q_1(q-1-p_r) -Q_0(q -pr)& \leq & \Gamma (q-1-p_r)  -\gamma(q -pr)\\&= &\Gamma (q-1)  -\gamma q   - p_r(\Gamma- \gamma)\\
&\leq&   \Gamma (q-1)  -\gamma q.
\end{array}  $$
Thus,
 $$rq +  \Gamma(q-1) -\gamma q = q(r+\Gamma- \gamma) - \Gamma > \sum\limits_{j\in [s]} g_j $$ which, by hypothesis, is not possible and the claim follows.

 From Claim  \ref{cota0} and (\ref{uno}) it follows that for each ${\bf h}=(h_1,\ldots. h_{s})\in \mathcal{H}^{s}_{q-1}$
\begin{equation}\label{dos}\sum\limits_{\textbf{h'}\in F({\bf h})}   \prod\limits_{i=1}^{s} g_i^{h'_i} (r-g_i)^{1-h'_i} \geq (q-p_r) \prod\limits_{i=1}^{s} g_i^{h_i} (r-g_i)^{1-h_i}.\end{equation}

Observe that, for every $\textbf{h'}\in \mathcal{H}^{s}_{q} $,  $|\{ j : h'_j = 1 \hbox{ with } j \in [t]\}| = q-p_r$. Therefore,  for every $\textbf{h'}\in \mathcal{H}^{s}_{q} $ there are exactly  $q-p_r$  elements $\textbf{h}\in \mathcal{H}^{s}_{q-1} $ such that, 
$\textbf{h'} \in F(\textbf{h})$. Thus,
$$ \sum\limits_{\textbf{h}\in H^{s}_{q-1}} \left(\sum\limits_{\textbf{h'}\in F(\textbf{h})} \prod\limits_{i=1}^{s} g_i^{h'_i} (r-g_i)^{1-h'_i}\right)=  (q-p_r) \sum\limits_{\textbf{h'}\in H^{s}_{q}}   \prod\limits_{i=1}^{s} g_i^{h'_i} (r-g_i)^{1-h'_i}. $$
On the other hand, by (\ref{dos}) we see that 
 $$ \sum\limits_{\textbf{h}\in H^{s}_{q-1}}\left( \sum\limits_{\textbf{h'}\in F(\textbf{h})} \prod\limits_{i=1}^{s} g_i^{h'_i} (r-g_i)^{1-h'_i}  \right)\geq  \sum\limits_{\textbf{h}\in H^{s}_{q-1}}  (q-p_r) \prod\limits_{i=1}^{s} g_i^{h_i} (r-g_i)^{1-h_i}$$ implying that
$$\sum\limits_{\textbf{h'}\in H^{s}_q}  \prod\limits_{i=1}^{s} g_i^{h'_i} (r-g_i)^{1-h'_i}\geq  \sum\limits_{\textbf{h}\in H^{s}_{q-1}}  \prod\limits_{i=1}^{s} g_i^{h_i} (r-g_i)^{1-h_i}$$ which, by definition,  is equivalent to $M(g_1,  \dots, g_{s}; q) \geq M(g_1,  \dots, g_{s}; q-1)$  and the lemma follows. \end{proof}

\begin{corollary}  Let $r\geq 2$, $c\geq 3$ and $G_{r,c}$ be a balance $c$-partite tournament such that for some $q\geq 1$,  $\delta(G_{r,c}) \geq q \big(r+\mu(G_{r,c}) \big).$ 
Then, for every $x\in V(G_{r,c})$ the number of maximal tournaments in which $x$ has out-degree $q$ is at least equal to the number of maximal tournaments in which $x$ has out-degree $q-1$.
\end{corollary}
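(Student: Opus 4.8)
The plan is to obtain the Corollary as a direct specialization of Lemma~\ref{chico}. Fix $x\in V(G_{r,c})$; relabelling the partite sets if necessary we may assume $x\in V_c$, and we set $s=c-1$ and $g_i=d_i^+(x)$ for $i\in[c-1]$. Since $G_{r,c}$ is balanced and $x\notin V_i$ for $i\in[c-1]$, every vertex of $V_i$ is either an out-neighbour or an in-neighbour of $x$, so $d_i^+(x)+d_i^-(x)=r$; in particular $0\le g_i\le r$ and $r-g_i=d_i^-(x)$, so the hypotheses of Lemma~\ref{chico} on the $g_i$ are met. With these substitutions the constraints defining $\mathcal{H}^{c-1}_k$ ($h_i=1$ when $g_i=r$, $h_i=0$ when $g_i=0$, $\sum h_i=k$) coincide with those defining $\mathcal{H}^+_k(x)$, and hence
$M(g_1,\dots,g_{c-1};k)=\sum_{h\in\mathcal{H}^+_k(x)}\prod_{i=1}^{c-1}d_i^+(x)^{h_i}d_i^-(x)^{1-h_i}$,
which by Remark~\ref{cuantos} is exactly the number of maximal tournaments of $G_{r,c}$ in which $x$ has out-degree $k$. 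So it suffices to show $M(g_1,\dots,g_{c-1};q)\ge M(g_1,\dots,g_{c-1};q-1)$.

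For this I would verify the numerical hypothesis of Lemma~\ref{chico}, namely $\sum_{i\in[c-1]}g_i\ge q(r+\Gamma-\gamma)-\Gamma$ with $\Gamma=\max_i g_i$ and $\gamma=\min_i g_i$. The one place the balanced condition and the parameter $\mu$ enter is the estimate $\Gamma-\gamma\le\mu(G_{r,c})$: from $d_i^+(x)+d_i^-(x)=r$ we get $|d_i^+(x)-d_i^-(x)|=|2d_i^+(x)-r|$, so the definition of $\mu(G_{r,c})$ forces $d_i^+(x)\in\bigl[\tfrac{r-\mu(G_{r,c})}{2},\tfrac{r+\mu(G_{r,c})}{2}\bigr]$ for every $i$, whence $\Gamma-\gamma\le\mu(G_{r,c})$. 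Combining this with $\Gamma\ge0$, the identity $\sum_{i\in[c-1]}d_i^+(x)=d^+(x)$ (valid since $x\in V_c$), and the hypothesis $\delta(G_{r,c})\ge q\bigl(r+\mu(G_{r,c})\bigr)$ yields
\[
\sum_{i\in[c-1]}g_i=d^+(x)\ \ge\ \delta(G_{r,c})\ \ge\ q\bigl(r+\mu(G_{r,c})\bigr)\ \ge\ q(r+\Gamma-\gamma)\ \ge\ q(r+\Gamma-\gamma)-\Gamma .
\]
Thus Lemma~\ref{chico} applies and gives $M(g_1,\dots,g_{c-1};q)\ge M(g_1,\dots,g_{c-1};q-1)$, which by the identification above is the assertion of the Corollary.

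Since the argument is just a substitution plus one elementary interval estimate, I do not anticipate a genuine obstacle. The only points requiring care are the translation $d_i^-(x)=r-d_i^+(x)$, which is exactly where "balanced" (together with $x\in V_c$, so $x\notin V_i$ for $i\le c-1$) is used, and the bound $\Gamma-\gamma\le\mu(G_{r,c})$, which is the quantitative content linking $\mu$ to the spread of the partite out-degrees.
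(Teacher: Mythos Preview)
Your proposal is correct and follows exactly the route the paper intends: the Corollary is stated without proof immediately after Lemma~\ref{chico}, and the very same substitution $s=c-1$, $g_i=d_i^+(x)$ together with the estimate $\Gamma-\gamma\le\mu(G_{r,c})$ is used explicitly later in the proof of Theorem~\ref{general}. There is nothing to add; your verification of the numerical hypothesis $\sum_i g_i\ge q(r+\Gamma-\gamma)-\Gamma$ via $d^+(x)\ge\delta(G_{r,c})\ge q(r+\mu(G_{r,c}))$ is precisely the intended one-line deduction.
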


The following theorem gives a condition regarding the minimum degree and the local partite irregularity to obtain a bound of $T_q^+(x)$.  

\begin{theorem}\label{general} Let $r\geq 2$, $c\geq 5$ and $G_{r,c}$ be a balance $c$-partite tournament such that for some $q\geq 0$,  $\delta(G_{r,c}) \geq q \big(r+\mu(G_{r,c}) \big)\big(\frac{c-1}{c-2}\big).$ 
Then, for every $x\in V(G_{r,c})$, $$T^+_q(x) \leq \sum\limits_{k=0}^{q} {{c-1}\choose {k}} \Big(\frac{d^+(x)}{c-1}\Big)^k \Big(\frac{d^-(x)}{c-1}\Big)^{c-1-k}. $$
\end{theorem}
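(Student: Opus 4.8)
The plan is to express $T_q^+(x)$ via Remark~\ref{cuantos} in terms of the quantities $M(g_1,\dots,g_{c-1};k)$ with $g_i = d_i^+(x)$, and then bound each $M$ by a symmetrized version in which all the $g_i$ are replaced by their average $\bar g = d^+(x)/(c-1)$. Concretely, since
$$T_q^+(x) = \sum_{k=0}^{q} M\big(d_1^+(x),\dots,d_{c-1}^+(x);k\big),$$
it suffices to show that for each $k\in\{0,1,\dots,q\}$ one has
$$M\big(d_1^+(x),\dots,d_{c-1}^+(x);k\big)\ \le\ \binom{c-1}{k}\Big(\tfrac{d^+(x)}{c-1}\Big)^k\Big(\tfrac{d^-(x)}{c-1}\Big)^{c-1-k} = M(\bar g,\dots,\bar g;k),$$
where the last equality holds because $\sum_{h\in\mathcal H^{c-1}_k}$ has $\binom{c-1}{k}$ terms once all coordinates are equal (and no coordinate is forced to $0$ or $r$). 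Note $\bar g = d^+(x)/(c-1)$ and $r-\bar g = d^-(x)/(c-1)$ since each vertex of $V_c$ has exactly $r$ non-neighbors in each other part, so $d^+(x)+d^-(x)=r(c-1)$.

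The heart of the argument is a smoothing/convexity step: I would show that replacing two coordinates $g_i > g_j$ by $g_i - \varepsilon$ and $g_j + \varepsilon$ (a step toward equalization) does not decrease $M(\cdot;k)$, provided $k$ is small relative to $s=c-1$ and the $g$'s. Fix all coordinates except positions $i,j$; then $M(\cdot;k)$, as a function of the pair $(g_i,g_j)$ subject to $g_i+g_j$ fixed, is an affine combination of the three monomials $g_ig_j$, $g_i(r-g_j)+(r-g_i)g_j$, $(r-g_i)(r-g_j)$ weighted by how many of the remaining $k$ (resp.\ $k-1$, $k-2$) ``$1$''s are distributed among the other $s-2$ positions. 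Writing $g_i = m+\delta$, $g_j = m-\delta$ with $2m = g_i+g_j$, each such monomial is a quadratic in $\delta$ whose leading coefficient is $-1$, $+2$, or $-1$ respectively; the combined leading coefficient is $-A + 2B - C$ where $A = M_{\text{rest}}(k)$, $B = M_{\text{rest}}(k-1)$, $C = M_{\text{rest}}(k-2)$ are the $M$-values of the remaining $s-2$ coordinates. The claim $-A+2B-C \le 0$ — i.e.\ log-concavity-type inequality $B^2 \ge$ \dots no, rather simple concavity $A + C \ge 2B$ of the sequence $k\mapsto M_{\text{rest}}(k)$ — is exactly where Lemma~\ref{chico} enters: under the degree hypothesis, $M_{\text{rest}}$ is nondecreasing up to index $q$, and one pushes this to the needed concavity (or, more carefully, one applies Lemma~\ref{chico} directly to the $s-2$ remaining coordinates to compare consecutive values and assemble the inequality). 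The factor $\frac{c-1}{c-2}$ in the hypothesis is precisely the slack needed so that the minimum-degree condition $\delta \ge q(r+\mu)\frac{c-1}{c-2}$ on the full vertex still forces $\sum_{\text{rest}} g_i \ge q'(r+\Gamma-\gamma)-\Gamma$ for the reduced index set of size $c-3$ after two coordinates are removed, i.e.\ it absorbs the loss of up to two coordinates.

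I would then iterate the smoothing step: starting from $(d_1^+(x),\dots,d_{c-1}^+(x))$, repeatedly equalize the current max and min coordinate; each step leaves $\sum g_i$ fixed and does not decrease $M(\cdot;k)$, and in the limit all coordinates equal $\bar g$, giving $M(d_1^+(x),\dots;k)\le M(\bar g,\dots,\bar g;k)$. Summing over $k=0,\dots,q$ yields the theorem. Two technical points need care and I expect them to be the main obstacles: first, handling coordinates pinned at $0$ or $r$ (where the definition of $\mathcal H^s_k$ forces $h_i$), since equalization may push a coordinate off $\{0,r\}$ — one must check the monomial bookkeeping still goes through, and indeed Lemma~\ref{chico} was stated to accommodate exactly these pinned coordinates via the parameters $p_r$ and the ``$g_i=0$'' case; second, verifying that the degree hypothesis is robust under removing two coordinates, which is the role of the $\frac{c-1}{c-2}$ factor and the reason $c\ge 5$ is assumed (so that $c-3\ge 2$ and the ratio stays controlled). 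Carrying out the convexity computation cleanly, rather than invoking a black-box majorization theorem, is the part most likely to require the careful case analysis already rehearsed in the proof of Lemma~\ref{chico}.
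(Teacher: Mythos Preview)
Your overall strategy (express $T_q^+(x)$ via $M$, then smooth the coordinates $g_i=d_i^+(x)$ toward their common average $\bar g$ by repeatedly replacing the current maximum and minimum by their mean, invoking Lemma~\ref{chico} on the remaining $c-3$ coordinates) is exactly the one the paper uses. The gap is in \emph{what} you try to smooth.

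You aim to show the term-by-term inequality $M(g_1,\dots,g_{c-1};k)\le M(\bar g,\dots,\bar g;k)$ for each fixed $k\le q$. As you compute, the $\delta^2$-coefficient of $M(\cdot;k)$ under the substitution $(g_i,g_j)=(m+\delta,m-\delta)$ is $2B-A-C$ with $A=M_{\mathrm{rest}}(k)$, $B=M_{\mathrm{rest}}(k-1)$, $C=M_{\mathrm{rest}}(k-2)$, so you need $A+C\ge 2B$, i.e.\ concavity of $k\mapsto M_{\mathrm{rest}}(k)$. Lemma~\ref{chico} gives only monotonicity $M_{\mathrm{rest}}(q)\ge M_{\mathrm{rest}}(q-1)$, and concavity is in general false: with two remaining coordinates both equal to $r/2$ one has $M_{\mathrm{rest}}(0)=M_{\mathrm{rest}}(2)=(r/2)^2$ and $M_{\mathrm{rest}}(1)=2(r/2)^2$, so $A+C<2B$. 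Hence the term-by-term bound cannot be obtained this way, and the hedge ``assemble the inequality'' from consecutive comparisons does not rescue it.

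The paper avoids this by smoothing the \emph{partial sum} $\sum_{k=0}^{q}M(g_{[c-1]};k)$ rather than each $M(\cdot;k)$. Writing $M(g_{[c-1]};k)=\sum_{j=0}^{2}M(g_{[c-3]};k-j)\,M(g_{c-2},g_{c-1};j)$ and summing over $k\le q$, all interior terms telescope (because $\sum_{j=0}^{2}M(x,y;j)=r^2$ is independent of $x,y$), leaving
\[
\sum_{k=0}^{q}M(g'_{[c-1]};k)-\sum_{k=0}^{q}M(g_{[c-1]};k)
=\bigl(g'_{c-2}g'_{c-1}-g_{c-2}g_{c-1}\bigr)\bigl[M(g_{[c-3]};q)-M(g_{[c-3]};q-1)\bigr].
\]
Now only the single monotonicity inequality $M(g_{[c-3]};q)\ge M(g_{[c-3]};q-1)$ is required, and this is precisely what Lemma~\ref{chico} supplies once one checks (using the factor $\tfrac{c-1}{c-2}$ and the removal of the specific coordinates $\Gamma,\gamma$) that $\sum_{i\in[c-3]}g_i\ge q(r+\Gamma^*-\gamma^*)-\Gamma^*$. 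So your reduction to Lemma~\ref{chico} and your reading of the role of $\tfrac{c-1}{c-2}$ are correct; the fix is to carry the smoothing on $\sum_{k\le q}M$ instead of on each $M(\cdot;k)$.
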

\begin{proof} Let $x\in V(G_{r,c})$, and suppose $x\in V_c$.  By Lemma \ref{cuantos},   we see that $$T_q^+(x) = \sum\limits_{k=0}^{ q}\sum_{h\in \mathcal{H}_k(x)}\prod_{i=1}^{c-1}d_i^+(x)^{h_i}d_i^-(x)^{1-h_i} =  \sum\limits_{k=0}^{ q}  M(d^+_1(x),  \dots, d^+_{c-1}(x); k).$$ 
For each $i$, with $i \in [c-1]$, let $g_i = d^+_i(x)$, and assume that  $g_{c-1}=  max\{ g_i\}_{i\in[c-1]}= \Gamma$ and $g_{c-2}=  min \{g_i\}_{i\in[c-1]}= \gamma$. Let  $g'_1, g'_2, \dots , g'_{s-1}, g'_{s}$ be real numbers such that,  for $i\in [c-3]$,  $g'_i = g_i$; and $g'_{c-2} = g'_{c-1} =  \frac{g_{c-2} + g_{c-1}}{2}$. 

 \begin{claim}\label{supremo}   $\sum\limits_{k=0}^{q} M(g_1, \dots, g_{c-1}; k)  \leq \sum\limits_{k=0}^{q} M(g'_1, \dots, g'_{c-1}; k). $
  \end{claim} If $q=0$,   $\sum\limits_{k=0}^{q} M(g_1, \dots, g_{c-1}; 0) = \prod\limits_{i=1}^{c-1} (r-g_i)$. Since $(r-g_{c-2})(r-g_{c-1})\leq (r-\frac{g_{c-2} + g_{c-1}}{2})^2$,  the claim follows.  Assume that $q\geq 1$.   For the sake of  readability, in what follows,  $g_1, \dots, g_{c-1}$ and  $g_1, \dots, g_{c-3}$ can be denoted as $g_{[c-1]}$ and $g_{[c-3]}$, respectively.  Observe that $$M( g_{[c-1]}; 0) = M(g_{[c-3]}; 0)M(g_{c-2}, g_{c-1}; 0);$$  
  $$M( g_{[c-1]}; 1)= M( g_{[c-3]}; 1)M(g_{c-2}, g_{c-1}; 0) +M(g_{[c-3]}; 0)M(g_{c-2}, g_{c-1}; 1) $$ and  for every  $k \geq 2$,  $$M(g_{[c-1]}; k) =\sum_{j=0}^2 M(g_{[c-3]}; k-j)M(g_{c-2}, g_{c-1}; j).$$  
Therefore, for $q=1$, 
$$\begin{array}{lcl}
\sum\limits_{k=0}^{1} M(g_{[c-1]}; k) &=& M(g_{[c-3]}; 0)\left[M(g_{c-2}, g_{c-1}; 0)+M(g_{c-2}, g_{c-1}; 1)\right]\\
&+&  M(g_{[c-3]}; 1)M(g_{c-2}, g_{c-1}; 0);
\end{array}$$
and  for $q\geq 2$, 
$$\begin{array}{lcl}
\sum\limits_{k=0}^{q} M(g_{[c-1]}; k) &=& \sum\limits_{k=0}^{q-2} M(g_{[c-3]}; k)\left[M(g_{c-2}, g_{c-1}; 0)+M(g_{c-2}, g_{c-1}; 1)+M(g_{c-2}, g_{c-1}; 2)\right] \\
& + &M(g_{[c-3]}; q-1)\left[M(g_{c-2}, g_{c-1}; 0)+M(g_{c-2}, g_{c-1}; 1)\right]\\
&+&  M(g_{[c-3]}; q)M(g_{c-2}, g_{c-1}; 0).
\end{array}$$

It is not hard to see that for  any pair of  reals $0\leq x,y\leq r $,  $M(x, y; 0)= (r-x)(r-y)$;  $M(x, y; 1) = r(x+y)  - 2xy$ and $M(x, y; 2)= xy$. Therefore,  $M(x, y; 2) +M(x, y; 1) + M(x, y; 0) = r^2$. Since  for $i\in [c-3]$,  $g'_i = g_i$  and $g_{c-2}+ g_{c-1} = g'_{c-2}+ g'_{c-1}$, we have, after some easy calculations, that 
$$\begin{array}{lcl}
\sum\limits_{k=0}^{q} M(g'_{[c-1]}; k) - \sum\limits_{k=0}^{q} M(g_{[c-1]}; k)&= &M(g_{[c-3]}; q-1)\left[g_{c-2}g_{c-1}- g'_{c-2}g'_{c-1} \right] \\
&+&  M(g_{[c-3]}; q)\left[ g'_{c-2}g'_{c-1} - g_{c-2}g_{c-1} \right]\\
&=& \left( g'_{c-2}g'_{c-1} - g_{c-2}g_{c-1}\right)\left[M(g_{[c-3]}; q) - M(g_{[c-3]}; q-1) \right].
\end{array}  $$

Since  $g'_{c-2}g'_{c-1} \geq g_{c-2}g_{c-1} $, it follows that  $\sum\limits_{k=0}^{q} M(g_{[c-1]}; k)\leq \sum\limits_{k=0}^{q} M(g'_{[c-1]}; k)$   if and only if 
$M(g_{[c-3]}; q-1)\leq M(g_{[c-3]}; q)$.

Since  $\sum\limits_{i\in [c-1]} g_i = d^+(x) \geq \delta(G_{r,c})\geq q \big(r+\mu(G_{r,c}) \big)\big(\frac{c-1}{c-2}\big)$, it follows that $d^+(x) \big(\frac{c-2}{c-1}\big) = d^+(x) - \frac{d^+(x)}{c-1}\geq \big(r+\mu(G_{r,c}) \big)$. Therefore, $d^+(x)\geq  \big(r+\mu(G_{r,c}) \big) + \frac{d^+(x)}{c-1}$.  On the one hand, clearly  $\gamma \leq \frac{d^+(x)}{c-1}$ and by definition $\mu(G_{r,c}) \geq \Gamma- \gamma$. It follows that $d^+(x) = \sum\limits_{i\in [c-1]} g_i \geq q(r+\Gamma-\gamma) + \gamma$. Since $g_{c-1}= \Gamma$ and $g_{c-2}= \gamma$, we see that $\sum\limits_{i\in [c-3]} g_i \geq q(r+\Gamma-\gamma) - \Gamma$.  On the other hand, observe that  $\Gamma\geq  \Gamma^* = max\{ g_i\}_{i\in[c-3]}$ and  $\gamma \leq \gamma^* = min \{g_i\}_{i\in[c-3]}$. Since $q\geq 1$,  it follows that 
$q(r+\Gamma-\gamma) - \Gamma \geq q(r+\Gamma^* - \gamma^*) - \Gamma^*$ which implies that $\sum\limits_{i\in [c-3]} g_i \geq q(r+\Gamma^*-\gamma^*) -\Gamma^*$. Hence, by Lemma \ref{chico},  $M(g_{[c-3]}; q-1)\leq M(g_{[c-3]}; q)$, and from here the claim follows.  
  
Observe that  $\Gamma\geq \Gamma' = max\{ g'_i\}_{i\in[c-1]}$ and  $\gamma \leq \gamma' = min \{g'_i\}_{i\in[c-1]}$. Since     $\sum\limits_{i\in [c-1]} g_i  = \sum\limits_{i\in [c-1]} g'_i$ it follows that  $\sum\limits_{i\in [c-1]} g'_i \geq q(r+\Gamma'-\gamma')\big(\frac{c-1}{c-2}\big)$, and clearly $0\leq g'_i\leq r$. Hence, we can 
iterate this procedure, and by the way  $g'_{c-2}$ and $g'_{c-1}$ are defined, we see that the limit of the difference $\Gamma'- \gamma'$  by iterating this procedure is zero. Thus, by Claim \ref{supremo}, it follows that $T_q^+(x)$ is bounded by $\sum\limits_{k=0}^{q} M(\frac{d^+(x)}{c-1},  \dots, \frac{d^+(x)}{c-1}; k)$. 
Finally, by definition, for each $k\in [q]$,  
$$\begin{array}{lcl}
M\Big(\frac{d^+(x)}{c-1}, \dots , \frac{d^+(x)}{c-1}; k\Big)&=& \sum\limits_{h\in \mathcal{H}^{c-1}_{k}}\prod\limits_{i=1}^{s}  \Big(\frac{d^+(x)}{c-1}\Big)^{h_i}\Big(r-\frac{d^+(x)}{c-1}\Big)^{1-h_i}  \\
& = &\sum\limits_{h\in \mathcal{H}^{c-1}_{k}} \Big(\frac{d^+(x)}{c-1}\Big)^{k}\Big(r-\frac{d^+(x)}{c-1}\Big)^{c-1-k}\\
 &=& {{c-1}\choose {k}}   \Big(\frac{d^+(x)}{c-1}\Big)^{k}\Big(r-\frac{d^+(x)}{c-1}\Big)^{c-1-k}, 
\end{array}$$ and it is not hard to see that $ r-\frac{d^+(x)}{c-1} = \frac{d^-(x)}{c-1}$.  From here the result follows. \end{proof}

The following theorem gives a condition regarding the minimum degree; the local partite irregularity and the global irregularity to obtain a bound of $T_q^+(x)$.  

\begin{theorem}\label{cota2} Let $r\geq 2$, $c\geq 5$ and $G_{r,c}$ be a balance $c$-partite tournament. If  for some $q\geq 0$,  $\delta(G_{r,c}) \geq q \big(r+\mu(G_{r,c}) \big)\big(\frac{c-1}{c-2}\big)$ and 
 $i_g(G_{r,c}) = r(c-1)\beta$ with  $0\leq \beta < \frac{c-2q-2}{c}$, then for every $x\in V(G_{r,c})$ we have that  
$$T_q^+(x) \leq   {{c-1}\choose {q+1}}  \Big(\frac{r}{2}\Big)^{c-1}\frac{ \Big(1+\beta\Big)^{c-2-2q} (q+1) }{c(1-\beta)-2q-2}.$$
\end{theorem}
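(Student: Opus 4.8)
The plan is to start from the bound already obtained in Theorem \ref{general}, namely
$$T_q^+(x) \leq \sum_{k=0}^{q} \binom{c-1}{k} \Big(\tfrac{d^+(x)}{c-1}\Big)^k \Big(\tfrac{d^-(x)}{c-1}\Big)^{c-1-k},$$
and to estimate the right-hand side using the global irregularity hypothesis. Write $d^+(x) = \tfrac{r(c-1)}{2}(1+\alpha)$ and $d^-(x) = \tfrac{r(c-1)}{2}(1-\alpha)$ for a suitable $\alpha$ with $|\alpha| \le \beta$ (this follows from $d^+(x)+d^-(x) = r(c-1)$ together with $|d^+(x)-d^-(x)| \le i_g(G_{r,c}) = r(c-1)\beta$). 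Substituting, each summand becomes $\binom{c-1}{k}\big(\tfrac r2\big)^{c-1}(1+\alpha)^k(1-\alpha)^{c-1-k}$, so after pulling out $\big(\tfrac r2\big)^{c-1}$ we must bound
$$S := \sum_{k=0}^{q}\binom{c-1}{k}(1+\alpha)^k(1-\alpha)^{c-1-k}.$$

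First I would reduce to the worst case $\alpha = \beta$: since the ratio of the $(k)$-th to the $(k+1)$-th term of the full binomial expansion behaves monotonically and the partial sum is taken over the \emph{small} values of $k$, the quantity $S$ is maximized (for fixed $c,q$) when $\alpha$ is as large as possible, i.e. $\alpha=\beta$; a short monotonicity argument in $\alpha$ (differentiate term-by-term, or compare consecutive partial sums) makes this rigorous. Next, with $\alpha=\beta$ fixed, I would bound the tail-truncated binomial sum $S = \sum_{k=0}^q \binom{c-1}{k}(1+\beta)^k(1-\beta)^{c-1-k}$ by comparing it to a geometric series. The key inequality is that consecutive terms $t_k = \binom{c-1}{k}(1+\beta)^k(1-\beta)^{c-1-k}$ satisfy
$$\frac{t_{k}}{t_{k+1}} = \frac{k+1}{c-1-k}\cdot\frac{1-\beta}{1+\beta} \le \frac{q+1}{c-2-q}\cdot\frac{1-\beta}{1+\beta}$$
for all $k \le q$; one checks that the hypothesis $\beta < \tfrac{c-2q-2}{c}$ is exactly what guarantees this ratio is bounded by something strictly less than $1$, in fact that $c(1-\beta)-2q-2 > 0$ and the ratio is at most $\tfrac{(q+1)(1-\beta)}{c(1-\beta)-2q-2} - (\text{something})$ — more precisely I would show $t_k/t_{k+1} \le \rho$ with $\rho := \tfrac{(q+1)(1-\beta)}{(c-2-q)(1+\beta)}$ and $\rho<1$. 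Then $S \le t_{q}\sum_{j\ge 0}\rho^{j} = \tfrac{t_q}{1-\rho}$, and since $t_q = \binom{c-1}{q}(1+\beta)^q(1-\beta)^{c-1-q}$, plugging in and simplifying $1-\rho = \tfrac{(c-2-q)(1+\beta)-(q+1)(1-\beta)}{(c-2-q)(1+\beta)} = \tfrac{c(1-\beta)-2q-2 + \beta(\cdots)}{(c-2-q)(1+\beta)}$ should collapse, after using $\binom{c-1}{q}\cdot\tfrac{1}{c-2-q} = \binom{c-1}{q+1}\cdot\tfrac{1}{c-1-q}\cdots$ and regrouping powers of $(1+\beta)$ and $(1-\beta)$, to the claimed expression
$$\binom{c-1}{q+1}\Big(\tfrac r2\Big)^{c-1}\frac{(1+\beta)^{c-2-2q}(q+1)}{c(1-\beta)-2q-2}.$$

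The main obstacle I anticipate is purely bookkeeping in the last step: getting the binomial coefficient to come out as $\binom{c-1}{q+1}$ rather than $\binom{c-1}{q}$, and getting the exponent of $(1+\beta)$ to be exactly $c-2-2q$ (which requires that the $(1-\beta)^{c-1-q}$ from $t_q$, the $(1-\beta)$ in the numerator of $1-\rho$'s reciprocal, and the $(1+\beta)$ factors all combine correctly against the $\big(\tfrac r2\big)^{c-1}$ normalization). I would handle this by first writing $\tfrac{t_q}{1-\rho}$ with a common denominator equal to $c(1-\beta)-2q-2$ and only then simplifying the powers; the identity $(c-1-q)\binom{c-1}{q} = (q+1)\binom{c-1}{q+1}$ is what converts the coefficient. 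One should also double-check the edge case $q=0$, where $S = (1-\beta)^{c-1}$ and the claimed bound reads $\binom{c-1}{1}\big(\tfrac r2\big)^{c-1}\tfrac{(1+\beta)^{c-2}}{c(1-\beta)-2}$, so the inequality $(1-\beta)^{c-1} \le (c-1)(1+\beta)^{c-2}/(c(1-\beta)-2)$ must be verified directly — it follows from $\beta < \tfrac{c-2}{c}$ and a crude bound $(1-\beta)^{c-1} \le (1+\beta)^{c-2}(1-\beta)$ together with $(c-1)(1-\beta) \ge$ ... — and confirms the constant is not off by a constant factor.
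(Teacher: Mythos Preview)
Your overall strategy---start from Theorem \ref{general}, parametrize $d^\pm(x)=\tfrac{r(c-1)}{2}(1\pm\alpha)$ with $|\alpha|\le\beta$, and bound the partial binomial sum by comparison with its largest term---is in the same spirit as the paper's proof. There is, however, a genuine error in your reduction step: the partial sum
\[
S(\alpha)=\sum_{k=0}^{q}\binom{c-1}{k}(1+\alpha)^{k}(1-\alpha)^{c-1-k}
\]
is the lower tail $\Pr[X\le q]$ of a binomial $X\sim\mathrm{Bin}\bigl(c-1,\tfrac{1+\alpha}{2}\bigr)$ and is therefore \emph{decreasing} in $\alpha$. The worst case is $\alpha=-\beta$ (that is, $d^{-}(x)$ as large as possible), not $\alpha=\beta$; already at $q=0$ one has $S(\beta)=(1-\beta)^{c-1}<(1+\beta)^{c-1}=S(-\beta)$. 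If you carry your plan through with $\alpha=\beta$ you obtain a quantity too small to dominate $S(-\beta)$, so the argument as written does not close. With the corrected sign your geometric-series estimate does succeed: one gets $S(-\beta)\le t_q/(1-\rho)$ with $t_q=\binom{c-1}{q}(1-\beta)^{q}(1+\beta)^{c-1-q}$ and $1-\rho=\tfrac{c(1-\beta)-2q-2}{(c-1-q)(1-\beta)}$, and then $(c-1-q)\binom{c-1}{q}=(q+1)\binom{c-1}{q+1}$ together with $(1-\beta)^{q+1}(1+\beta)^{q+1}\le 1$ yields exactly the stated bound. (Incidentally, your denominator $c-2-q$ in the ratio should read $c-1-q$.)

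For comparison, the paper never reduces to a worst-case $\alpha$. It keeps $d^+(x),d^-(x)$ explicit, sets $g(q)=\sum_{k\le q}\binom{c-1}{k}(d^+/d^-)^{k}$, and combines the identity $g(q+1)=g(q)+\binom{c-1}{q+1}(d^+/d^-)^{q+1}$ with the termwise bound $\tfrac{c-1-k}{k+1}\ge\tfrac{c-1-q}{q+1}$ to obtain
\[
g(q)\Bigl(\tfrac{d^+}{d^-}\cdot\tfrac{c-1-q}{q+1}-1\Bigr)<\binom{c-1}{q+1}\Bigl(\tfrac{d^+}{d^-}\Bigr)^{q+1}.
\]
The remaining factor $(d^-/(c-1))^{c-1}(d^+/d^-)^{q+1}$ is then rewritten as $(d^-/(c-1))^{c-3-2q}\bigl(d^+d^-/(c-1)^2\bigr)^{q+1}$ and estimated via $d^+d^-\le\bigl(\tfrac{r(c-1)}{2}\bigr)^{2}$ and $d^-\le\Delta=\tfrac{r(c-1)(1+\beta)}{2}$. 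Both routes ultimately compare the partial sum to its boundary term, but the paper's factorization trick bypasses the monotonicity-in-$\alpha$ issue that tripped you up.
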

\begin{proof} Let $x\in V(G_{r,c})$. By Theorem \ref{supremo},  it follows that 
\begin{equation}\label{cota1}T^+_q(x) \leq \sum\limits_{k=0}^{q} {{c-1}\choose {k}} \Big(\frac{d^+(x)}{c-1}\Big)^k \Big(\frac{d^-(x)}{c-1}\Big)^{c-1-k}. \end{equation}   Observe that $\sum\limits_{k=0}^{q} {{c-1}\choose {k}}\Big(\frac{d^+(x)}{c-1}\Big)^k \Big(\frac{d^-(x)}{c-1}\Big)^{c-1-k} = \Big(\frac{d^-(x)}{c-1}\Big)^{c-1} \sum\limits_{k=0}^{q} {{c-1}\choose {k}} \Big(\frac{d^+(x)}{d^-(x)}\Big)^k$.
For every $q$, with $0\leq q \leq c-1$, let $g(q) = \sum\limits_{k=0}^q {{c-1}\choose {k}} \Big(\frac{d^+(x)}{d^-(x)}\Big)^k$.
Observe that for $q< c-1$,
 $$ \begin{array}{lcl} g(q+1) = 1 +  \sum\limits_{k=1}^{q+1} {{c-1}\choose {k}} \Big(\frac{d^+(x)}{d^-(x)}\Big)^k 
&=& 1 +  \sum\limits_{k=0}^{q} {{c-1}\choose {k+1}} \Big(\frac{d^+(x)}{d^-(x)}\Big)^{k+1} \\
& = & 1 +\Big(\frac{d^+(x)}{d^-(x)}\Big)  \sum\limits_{k=0}^{q} {{c-1}\choose {k+1}} \Big(\frac{d^+(x)}{d^-(x)}\Big)^{k} \\
& = & 1 +\Big(\frac{d^+(x)}{d^-(x)}\Big) \sum\limits_{k=0}^{q} {{c-1}\choose {k}}\frac{c-1-k}{k+1} \Big(\frac{d^+(x)}{d^-(x)}\Big)^{k}.\\
\end{array}$$
 Notice that for each $k\leq q$, $\frac{c-1-q}{q+1}\leq \frac{c-1-k}{k+1}$. Thus, 
 $$ \begin{array}{lcl}
g(q+1) &\geq& 1 +\Big(\frac{d^+(x)}{d^-(x)}\Big) \sum\limits_{k=0}^{q} {{c-1}\choose {k}}\frac{c-1-q}{q+1}\Big(\frac{d^+(x)}{d^-(x)}\Big)^{k} \\ 
&=&  1+ \Big(\frac{d^+(x)}{d^-(x)}\Big)\Big(\frac{c-1-q}{q+1}\Big)  \sum\limits_{k=0}^{q} {{c-1}\choose {k}} \Big(\frac{d^+(x)}{d^-(x)}\Big)^{k}  \\
& >& \Big(\frac{d^+(x)}{d^-(x)}\Big)\Big(\frac{c-1-q}{q+1}\Big)g(q).
 \end{array}$$
  On the other hand, $g(q+1)= g(q) + {{c-1}\choose {q+1}} \Big(\frac{d^+(x)}{d^-(x)}\Big)^{q+1}$. Therefore  
  $$ g(q) + {{c-1}\choose {q+1}} \Big(\frac{d^+(x)}{d^-(x)}\Big)^{q+1} >  \Big(\frac{d^+(x)}{d^-(x)}\Big)\Big(\frac{c-1-q}{q+1}\Big)g(q)$$ which implies that 
   \begin{equation}\label{tres} {{c-1}\choose {q+1}}\Big(\frac{d^+(x)}{d^-(x)}\Big)^{q+1} >  \Bigg(\Big(\frac{d^+(x)}{d^-(x)}\Big)\Big(\frac{c-1-q}{q+1}\Big)-1\Bigg)g(q).\end{equation}
  Clearly,     $\frac{d^+(x)}{d^-(x)} \geq \frac{\delta(G_{r,c})}{\Delta(G_{r,c})}$, and since $\Delta(G_{r,c}) = \frac{r(c-1) + i_g(G_{r,s})}{2}$,  $\delta(G_{r,c}) = \frac{r(c-1) -  i_g(G_{r,s})}{2}$, and $i_g(G_{r,c}) = r(c-1)\beta$, it is not hard to see that $\frac{\delta(G_{r,c})}{\Delta(G_{r,c})} =\frac{1-\beta}{1+\beta}$.  Moreover, since $\beta < \frac{c-2q-2}{c}$, it follows that $\frac{1-\beta}{1+\beta}> \frac{2q+2}{2c-2q-2}= \frac{q+1}{c-q-1}$. Therefore $\Big(\frac{1-\beta}{1+\beta}\Big)\Big(\frac{c-1-q}{q+1}\Big)-1>0.$ Thus, $\Big(\frac{d^+(x)}{d^-(x)}\Big)\Big(\frac{c-1-q}{q+1}\Big)-1\geq \Big(\frac{1-\beta}{1+\beta}\Big)\Big(\frac{c-1-q}{q+1}\Big)-1= \frac{(1-\beta)(c-1-q)-(1+\beta)(q+1)}{(1+\beta)(q+1)}= \frac{c(1-\beta)-2q-2}{(1+\beta)(q+1)}>0.$ Hence, by (\ref{tres}), 
$$ {{c-1}\choose {q+1}}\Big(\frac{d^+(x)}{d^-(x)}\Big)^{q+1} >  \Bigg(\frac{c(1-\beta)-2q-2}{(1+\beta)(q+1)}\Bigg)g(q)$$ and then 
$${{c-1}\choose {q+1}}\Big(\frac{d^+(x)}{d^-(x)}\Big)^{q+1}\frac{ (1+\beta)(q+1)}{c(1-\beta)-2q-2} >  g(q).$$
Therefore, it follows that, for $q< c-1$, 
$$ \begin{array}{lcl}
\sum\limits_{k=0}^{q} {{c-1}\choose {k}}\Big(\frac{d^+(x)}{c-1}\Big)^k \Big(\frac{d^-(x)}{c-1}\Big)^{c-1-k} &=& \Big(\frac{d^-(x)}{c-1}\Big)^{c-1} \sum\limits_{k=0}^{q} {{c-1}\choose {k}} \Big(\frac{d^+(x)}{d^-(x)}\Big)^k\\ 
 &= & \Big(\frac{d^-(x)}{c-1}\Big)^{c-1} g(q)\\
& <& \Big(\frac{d^-(x)}{c-1}\Big)^{c-1} {{c-1}\choose {q+1}}\Big(\frac{d^+(x)}{d^-(x)}\Big)^{q+1}\frac{ (1+\beta)(q+1)}{c(1-\beta)-2q-2}. \end{array}$$
Thus, by (\ref{cota1}),  $$T_q^+(x) < \Big(\frac{d^-(x)}{c-1}\Big)^{c-1} {{c-1}\choose {q+1}}\Big(\frac{d^+(x)}{d^-(x)}\Big)^{q+1}\frac{ (1+\beta)(q+1)}{c(1-\beta)-2q-2}.$$ 

Finally, observe that
$$ \begin{array}{lcl}
 \Big(\frac{d^-(x)}{c-1}\Big)^{c-1}\Big(\frac{d^+(x)}{d^-(x)}\Big)^{q+1} &=& \Big(\frac{d^-(x)}{c-1}\Big)^{c-1}\Big(\frac{d^+(x)}{c-1}\Big)^{q+1}\Big(\frac{c-1}{d^-(x)}\Big)^{q+1}\\ 
 &= &\Big(\frac{d^-(x)}{c-1}\Big)^{c-1-2q-2}\Big(\frac{d^+(x)d^-(x)}{(c-1)^2}\Big)^{q+1}.\end{array}$$
On the one hand, since $d^+(x) +d^-(x) = r(c-1)$, it follows that $\frac{d^+(x)d^-(x)}{(c-1)^2}\leq \frac{r^2(c-1)^2}{4(c-1)^2}= \frac{r^2}{4}$ and therefore $\Big(\frac{d^+(x)d^-(x)}{(c-1)^2}\Big)^{q+1}\leq \Big(\frac{r}{2}\Big)^{2q+2}$. 

On the other hand, $d^-(x)\leq \Delta(G_{r,c}) = \frac{r(c-1)+i_g(G_{r,c})}{2}= \frac{r(c-1)+r(c-1)\beta}{2}= \frac{r(c-1)(1+\beta)}{2}$ and therefore $\Big(\frac{d^-(x)}{c-1}\Big)^{c-1-2q-2} \leq \Big(\frac{r(1+\beta)}{2}\Big)^{c-1-2q-2}= \Big(\frac{r}{2}\Big)^{c-1-2q-2}(1+\beta)^{c-1-2q-2}$. Thus,  
$$\Big(\frac{d^-(x)}{c-1}\Big)^{c-1}\Big(\frac{d^+(x)}{d^-(x)}\Big)^{q+1}\leq \Big(\frac{r}{2}\Big)^{2q+2} \Big(\frac{r}{2}\Big)^{c-1-2q-2}(1+\beta)^{c-1-2q-2}=  \Big(\frac{r}{2}\Big)^{c-1}(1+\beta)^{c-1-2q-2}$$ and from here, the result follows.\end{proof}

\section{Maximal strong subtournament with  minimum degree at most $\left\lfloor \frac{c-2}{4}\right\rfloor+1$ }
Recall that if $T$  is a tournament of order $c$ such that $\delta(T) \geq \lfloor\frac{c-2}{4}\rfloor +1$,  then $T$ is strong. 
As an application of Theorem \ref{cota2}, we give  sufficient conditions for the existence of a maximal strong subtournament with  minimum degree at most $\left\lfloor \frac{c-2}{4}\right\rfloor+1$, in a balanced $c$-partite tournament.

 \begin{theorem}\label{fin}  Let $G_{r,c}$ be a balanced $c$-partite tournament, with $r\geq 2$,  such that  $\delta(G_{r,c}) \geq \lfloor\frac{c-2}{4}\rfloor\big(r+\mu(G_{r,c}) \big)\big(\frac{c-1}{c-2}\big)$.  Then  $G_{r,c}$ contains a strong connected tournament $T$ of order $c$ such that $\delta(T) \geq \lfloor\frac{c-2}{4}\rfloor +1$, whenever
 
\noindent {\it i)} $i_g(G_{r,c}) \leq  \frac{ r}{2}$  and $c \geq 13$, except for $14, 15$ and $18$.

   \noindent {\it ii)}  $i_g(G_{r,c}) \leq r $  and $c \geq 17$, except for $18, 19$ and $22$.

   \noindent {\it iii)} $i_g(G_{r,c}) \leq \frac{3r}{2} $  and $c \geq 21$, except for $22, 23$ and $26$.

 \end{theorem}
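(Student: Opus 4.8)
The plan is to derive Theorem~\ref{fin} directly from Theorem~\ref{cota2} by a counting argument. The key idea is this: if $G_{r,c}$ contains \emph{no} maximal (i.e. order-$c$) subtournament $T$ with $\delta(T)\geq \lfloor\frac{c-2}{4}\rfloor+1$, then every maximal subtournament of $G_{r,c}$ has some vertex of out-degree at most $\lfloor\frac{c-2}{4}\rfloor$ \emph{or} some vertex of in-degree at most $\lfloor\frac{c-2}{4}\rfloor$; recall that a tournament of order $c$ with $\delta\geq\lfloor\frac{c-2}{4}\rfloor+1$ is automatically strong, so this also handles the strong-connectivity requirement. Set $q=\lfloor\frac{c-2}{4}\rfloor$. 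Each such ``bad'' maximal subtournament is then counted among the $T_q^+(v)$ or $T_q^-(v)$ for at least one vertex $v$. On the other hand, the total number of maximal subtournaments of $G_{r,c}$ is exactly $r^c$ (one vertex from each of the $c$ parts). So if
$$ \sum_{v\in V(G_{r,c})} \bigl(T_q^+(v)+T_q^-(v)\bigr) < r^c, $$
then some maximal subtournament is counted in none of these sums, hence is strong with minimum degree at least $q+1$, and we are done.

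Next I would bound the left-hand side using Theorem~\ref{cota2}. The hypothesis $\delta(G_{r,c})\geq q(r+\mu(G_{r,c}))\bigl(\frac{c-1}{c-2}\bigr)$ is exactly the first hypothesis of Theorem~\ref{cota2} with this $q$. For the second hypothesis, writing $i_g(G_{r,c})=r(c-1)\beta$, the three cases $i_g\leq \frac r2, r, \frac{3r}{2}$ correspond to $\beta\leq \frac{1}{2(c-1)}, \frac{1}{c-1}, \frac{3}{2(c-1)}$ respectively; in each case one must check $\beta<\frac{c-2q-2}{c}$, which for $q=\lfloor\frac{c-2}{4}\rfloor$ gives roughly $c-2q-2\geq \frac{c}{2}-3$, so the condition $\beta<\frac{c-2q-2}{c}$ is satisfied once $c$ is large enough relative to the numerator of $\beta$; this is where the lower bounds on $c$ (and the sporadic exceptions) enter. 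Applying Theorem~\ref{cota2} and the analogous bound for $T_q^-(v)$, together with $|V(G_{r,c})|=rc$, yields
$$ \sum_{v} \bigl(T_q^+(v)+T_q^-(v)\bigr) \;\leq\; 2rc\binom{c-1}{q+1}\Bigl(\frac r2\Bigr)^{c-1}\frac{(1+\beta)^{c-2-2q}(q+1)}{c(1-\beta)-2q-2}. $$
So it suffices to show the right-hand side is strictly less than $r^c$, i.e.
$$ 2c\binom{c-1}{q+1}\frac{(1+\beta)^{c-2-2q}(q+1)}{c(1-\beta)-2q-2} \;<\; 2^{c-1}. $$
Note $r$ cancels entirely (the factor $r^{c-1}$ from $(r/2)^{c-1}$ against $r^c$ leaves $r\geq 2\geq 2^1$, absorbed into $2^{c-1}$ versus $2^{c-1}$ — one should be slightly careful here, but $r\ge 2$ gives the needed slack), so the inequality becomes purely a function of $c$ (with $\beta$ at its maximal allowed value in each of the three cases, which is the worst case since the left side is increasing in $\beta$).

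The remaining work, and the main obstacle, is a finite-plus-asymptotic verification of this last inequality in each of the three regimes. The dominant term on the left is $\binom{c-1}{q+1}$ with $q+1\approx c/4$, which by entropy/Stirling grows like $2^{(c-1)H(1/4)}\approx 2^{0.811(c-1)}$, comfortably below $2^{c-1}$; the factors $(1+\beta)^{c-2-2q}$ and $2c(q+1)/(c(1-\beta)-2q-2)$ are only singly exponential with a tiny base (since $\beta=O(1/c)$, $(1+\beta)^{c/2}$ tends to a constant like $e^{1/4}, e^{1/2}, e^{3/4}$) and polynomial, respectively, so for $c$ large the inequality holds with room to spare. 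One then checks the small values of $c$ by hand (using the exact floor $q=\lfloor\frac{c-2}{4}\rfloor$, which makes the behaviour slightly non-monotone in $c$ mod $4$); the listed exceptions $14,15,18$ (resp.\ $18,19,22$; resp.\ $22,23,26$) are precisely the values where $c-2q-2$ drops and the denominator $c(1-\beta)-2q-2$ becomes too small — in fact for those $c$ one has $4\mid(c-2)$ or nearby, pushing $q$ up by one and killing the bound — so these are genuinely excluded rather than artifacts. I would organise the proof by: (1) the counting reduction above; (2) plugging into Theorem~\ref{cota2} and reducing to the $c$-only inequality; (3) a clean asymptotic argument (entropy bound on the binomial) covering all sufficiently large $c$; (4) a short table or case check for the finitely many remaining values, confirming both the claimed ranges and the exceptions.
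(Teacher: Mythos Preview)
Your overall strategy---the counting reduction (every ``bad'' maximal subtournament contributes to some $T_q^+(v)$ or $T_q^-(v)$, versus the total $r^c$), the invocation of Theorem~\ref{cota2} with $q=\lfloor\frac{c-2}{4}\rfloor$, and the reduction to a purely $c$-dependent inequality after $r$ cancels---is exactly the paper's approach (its Claim~\ref{formula}). Two small corrections to your write-up: first, $r$ cancels \emph{exactly} (the $2rc\,(r/2)^{c-1}$ against $r^c$ leaves precisely $c/2^{c-2}$), so no ``$r\ge 2$ slack'' is needed and the target inequality is $c\binom{c-1}{q+1}\frac{(1+\beta)^{c-2-2q}(q+1)}{c(1-\beta)-2q-2}<2^{c-2}$; second, the hypothesis $\beta<\frac{c-2q-2}{c}$ of Theorem~\ref{cota2} is \emph{not} where the thresholds and exceptions come from---that condition is comfortably satisfied for all $c$ in the stated ranges, and the exceptions arise solely from the main inequality failing when $q$ jumps (at $c\equiv 2,3\pmod 4$ just above the threshold).

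The only substantive difference is your step~(3)--(4): you propose an entropy/Stirling bound on $\binom{c-1}{q+1}$ to cover all large $c$ at once, followed by a finite check. The paper instead verifies four base values of $c$ in each residue class modulo~$4$ and then shows $f_\alpha(c+4)/f_\alpha(c)<16=g(c+4)/g(c)$, giving an explicit step-by-four induction. Both work; the paper's route avoids having to pin down an explicit asymptotic cutoff and makes the dependence on the residue of $c$ mod~$4$ (hence the pattern of exceptions) more transparent, while your entropy argument is conceptually cleaner for the ``eventually'' part but pushes more into the finite table.
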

\begin{proof}  In order to prove this theorem, we first show the following.

\begin{claim}\label{formula} Let $r\geq 2$ and $c\geq 5$. For  every balanced $c$-partite tournament $G_{r,c}$ with  $\delta(G_{r,c}) \geq \lfloor\frac{c-2}{4}\rfloor\big(r+\mu(G_{r,c}) \big)\big(\frac{c-1}{c-2}\big)$ and 
 $i_g(G_{r,c}) \leq  \frac{\alpha r}{2}$ ($\alpha \geq 0$), if $$ 2^{c-2} > {{c-1}\choose {\lfloor\frac{c-2}{4}\rfloor+1}} \frac{ \Big(\frac{2c-2+\alpha}{2c-2}\Big)^{c-2-2\lfloor\frac{c-2}{4}\rfloor} (\lfloor\frac{c-2}{4}\rfloor+1)c }{c(\frac{2c-2-\alpha}{2c-2})-2\lfloor\frac{c-2}{4}\rfloor-2} $$
then  $G_{r,c}$ contains a strong connected tournament $T$ of order $c$ such that $\delta(T) \geq \lfloor\frac{c-2}{4}\rfloor +1.$
\end{claim}
Let  $G_{r,c}$ be a balanced $c$-partite tournament  as in the statement of the claim, and suppose there is no tournament $T$ of order $c$ in $G_{r,c}$ such that $\delta(T) \geq \lfloor\frac{c-2}{4}\rfloor +1.$  Thus, each of those tournaments has minimal degree at most $\lfloor\frac{c-2}{4}\rfloor$, and since there are $r^c$ tournaments of order $c$  in $G_{r,c}$,  it follows that $$\sum\limits_{x\in V(G_{r,c})} (T_{\lfloor\frac{c-2}{4}\rfloor}^+(x) + T_{\lfloor\frac{c-2}{4}\rfloor}^-(x)) \geq r^c.$$ Since $|V(G_{r,c})|=rc$, by Theorem \ref{cota2}, we  see that $$\Big(2rc\Big) {{c-1}\choose {\lfloor\frac{c-2}{4}\rfloor+1}}  \Big(\frac{r}{2}\Big)^{c-1}\frac{ \Big(1+\frac{i_g(G_{r,c})}{r(c-1)}\Big)^{c-2-2\lfloor\frac{c-2}{4}\rfloor} (\lfloor\frac{c-2}{4}\rfloor+1) }{c(1-\frac{i_g(G_{r,c})}{r(c-1)})-2\lfloor\frac{c-2}{4}\rfloor-2}\geq r^c ,$$ and since $i_g(G_{r,c}) \leq \frac{\alpha r}{2}$, it follows that  $\frac{i_g(G_{r,c})}{r(c-1)}\leq  \frac{\alpha}{2(c-1)}$; $1+ \frac{i_g(G_{r,c})}{r(c-1)}\leq  \frac{2c-2+\alpha}{2c-2}$  and  $1- \frac{i_g(G_{r,c})}{r(c-1)}\geq  \frac{2c-2-\alpha}{2c-2}$. Thus,  
$$\Big(2rc\Big) {{c-1}\choose {\lfloor\frac{c-2}{4}\rfloor+1}}  \Big(\frac{r}{2}\Big)^{c-1}\frac{ \Big( \frac{2c-2+\alpha}{2c-2}\Big)^{c-2-2\lfloor\frac{c-2}{4}\rfloor} (\lfloor\frac{c-2}{4}\rfloor+1) }{c( \frac{2c-2-\alpha}{2c-2})-2\lfloor\frac{c-2}{4}\rfloor-2}\geq r^c. $$
Multiplying both sides of the inequality by $\frac{2^{c-1}}{r^{c-1}}\Big(\frac{1}{2r}\Big)$, we obtain that $$ c{{c-1}\choose {\lfloor\frac{c-2}{4}\rfloor+1}} \frac{ \Big( \frac{2c-2+\alpha}{2c-2}\Big)^{c-2-2\lfloor\frac{c-2}{4}\rfloor} (\lfloor\frac{c-2}{4}\rfloor+1) }{c( \frac{2c-2-\alpha}{2c-2})-2\lfloor\frac{c-2}{4}\rfloor-2}\geq 2^{c-2}$$
and from here the claim follows. 

Let $f_\alpha(c) = {{c-1}\choose {\lfloor\frac{c-2}{4}\rfloor+1}} \frac{ \Big( \frac{2c-2+\alpha}{2c-2}\Big)^{c-2-2\lfloor\frac{c-2}{4}\rfloor} (\lfloor\frac{c-2}{4}\rfloor+1) c}{c( \frac{2c-2-\alpha}{2c-2})-2\lfloor\frac{c-2}{4}\rfloor-2}$ and $g(c) = 2^{c-2}$.  Notice that for $0\leq \alpha \leq \alpha'$, for every $c\geq 2$, $f_\alpha (c) \leq f_{\alpha'}(c)$.

If $i_g(G_{r,c}) \leq  \frac{ r}{2}$, it follows that $\alpha \leq 1$ and it is not hard to see that $f_1(c) < g(c)$ whenever $c\in\{13, 16, 19, 22\}$. Analogously,  if $i_g(G_{r,c}) \leq  r$, then $\alpha \leq 2$ and $f_2(c) < g(c)$ whenever $c\in\{17, 20, 23, 26\}$; and if $i_g(G_{r,c}) \leq  \frac{3r}{2}$, then $\alpha \leq 3$ and $f_3(c) < g(c)$ whenever $c\in\{21, 24, 27, 30\}$.

To end the proof, we just need to show that, for $\alpha \in\{1,2,3\}$, if for some $c\geq 13$ we have that  $f_\alpha(c) < g(c)$, then $f_\alpha(c+4) < g(c+4)$. For this we show that 
 $\frac{f_\alpha(c+4)}{f_\alpha(c)}\leq \frac{g(c+4)}{g(c)} $. 
 Clearly,  for every $c\geq 13$,  $\frac{g(c+4)}{g(c)}= 16$. On the other hand, it is not difficult to see that, for every $c\geq 13$,  
 $$\frac{ \Big( \frac{2(c+4)-2+\alpha}{2(c+4)-2}\Big)^{(c+4)-2-2\lfloor\frac{c+2}{4}\rfloor} }{ \Big( \frac{2c-2+\alpha}{2c-2}\Big)^{c-2-2\lfloor\frac{c-2}{4}\rfloor} } \leq  \Big( \frac{2c+6+\alpha}{2c+6}\Big)^{2}\leq \frac{6}{5},$$
 
and using a solver, it is possible to verify that,  for $c\geq 13$, 
 
$$\Bigg(
 \frac{{{(c+4)-1}\choose {\lfloor\frac{(c+4)-2}{4}\rfloor+1}}(\lfloor\frac{(c+4)-2}{4}\rfloor+1)(c+4)}{{{c-1}\choose {\lfloor\frac{c-2}{4}\rfloor+1}}(\lfloor\frac{c-2}{4}\rfloor+1)(c)} \Bigg) \Bigg(\frac{c( \frac{2c-2-\alpha}{2c-2})-2\lfloor\frac{c-2}{4}\rfloor-2}{(c+4)( \frac{2(c+4)-2-\alpha}{2(c+4)-2})-2\lfloor\frac{(c+4)-2}{4}\rfloor-2}\Bigg) \leq \frac{32}{3}.$$

Thus, for $c\geq 13$,  $\frac{f_\alpha(c+4)}{f_\alpha(c)}\leq \big(\frac{6}{5}\big)\big(\frac{32}{3}\big)< 16 = \frac{g(c+4)}{g(c)} $ and the result follows. \end{proof}

As we can observe from the proof of Claim \ref{formula},  it is possible to obtain analogous results to Theorem  \ref{fin}  for greater values of global irregularity.

\end{document}